\def\R{{\mathbb{R}}}
\def\P{{\mathcal{P}}}
\def\zero{{\mathbf{0}}}
\def\aa{{\bm{a}}}
\def\xx{{\bm{x}}}
\def\yy{{\bm{y}}}
\def\zz{{\bm{z}}}
\def\phi{\varphi}
\newtheorem{thm}{Theorem}
\newtheorem*{conj}{Conjecture}
\newtheorem{lemma}{Lemma}
\newtheorem{claim}{Claim}
\newtheorem*{proposition}{Proposition}
\newtheorem*{prob}{Problem}
\definecolor{orcid_color}{HTML}{A6CE39}
\DeclareRobustCommand{\orcidicon}{%
	\raisebox{.2mm}{\scalerel*{%
	\begin{tikzpicture}[xscale=1,yscale=-1,transform shape]
	\filldraw[color=orcid_color] svg {M256,128c0,70.7-57.3,128-128,128C57.3,256,0,198.7,0,128C0,57.3,57.3,0,128,0C198.7,0,256,57.3,256,128z};
	\filldraw[color=white] svg {M86.3,186.2H70.9V79.1h15.4v48.4V186.2z} svg {M108.9,79.1h41.6c39.6,0,57,28.3,57,53.6c0,27.5-21.5,53.6-56.8,53.6h-41.8V79.1z M124.3,172.4h24.5
		c34.9,0,42.9-26.5,42.9-39.7c0-21.5-13.7-39.7-43.7-39.7h-23.7V172.4z} svg {M88.7,56.8c0,5.5-4.5,10.1-10.1,10.1c-5.6,0-10.1-4.6-10.1-10.1c0-5.6,4.5-10.1,10.1-10.1
		C84.2,46.7,88.7,51.3,88.7,56.8z};
	\end{tikzpicture}}{|}}%
}
\newcommand{\orcid}[1]{\href{https://orcid.org/#1}{\orcidicon}}
\title[Alon's transmitting problem]{Alon's transmitting problem and\\ multicolor Beck--Spencer Lemma}
\author[Norihide Tokushige]{Norihide Tokushige\,\orcid{0000-0002-9487-7545}}
\address{College of Education, University of the Ryukyus, Nishihara  903-0213, Japan}
\email{hide@cs.u-ryukyu.ac.jp}
\urladdr{http://www.cc.u-ryukyu.ac.jp/~hide/}
\subjclass{94A05, 05C35, 90C10}
\keywords{Hamming graph, burning number, floating variable, discrepancy}
\begin{document}
\begin{abstract}
The Hamming graph $H(n,q)$ is defined on the vertex set $\{1,2,\ldots,q\}^n$ and
two vertices are adjacent if and only if they differ in precisely one 
coordinate.
Alon (1992) proved that for any sequence $v_1,\ldots,v_b$ of
$b=\lceil\frac n2\rceil$ vertices of $H(n,2)$, there is a vertex 
whose distance from $v_i$ is at least $b-i+1$ for all $1\leq i\leq b$.
In this note, we prove that for any $q\geq 3$ and any sequence 
$v_1,\ldots,v_b$ of
$b=\lfloor(1-\frac1q)n\rfloor$ vertices of $H(n,q)$, there is a vertex 
whose distance from $v_i$ is at least $b-i+1$ for all $1\leq i\leq b$.

Alon used a lemma due to Beck and Spencer (1983) which,
in turn, was based on the floating variable method introduced by 
Beck and Fiala (1981) who studied combinatorial discrepancies. 
For our proof, we extend the Beck--Spencer Lemma by using a multicolor 
version of the floating variable method due to Doerr and Srivastav (2003).
\end{abstract}

\maketitle

\hypersetup{pdfborder={0 0 1}} % Put back the borders of hyperlinks

\section{Introduction}
Alon posed a transmitting problem in \cite{Alon} and obtained an optimal
solution. This problem can be described in terms of the graph burning number
(see, e.g., \cite{Bonato}).
Let $G$ be a finite graph with the vertex set $V$. 
For vertices $u,v\in V$ let $d(u,v)$ denote the distance between $u$ and $v$.
For a non-negative integer $k$, 
let $\Gamma_k(v)$ denote the $k$-neighbors of $v$, that is,
the set of vertices $u\in V$ such that $d(u,v)\leq k$.
For $v_1,v_2,\ldots,v_b\in V$ we say that $(v_1,v_2,\ldots,v_b)$
is a burning sequence of length $b$
if $\Gamma_{b-1}(v_1)\cup\Gamma_{b-2}(v_2)\cup\cdots\cup\Gamma_{0}(v_b)=V$. 
The burning number of $G$, denoted by $b(G)$, is defined to be the minimum 
length of the burning sequences. We can think of the vertices as processors.
Suppose that there is a sender outside the graph, and it sends a message
to $v_i$ at round $i$. Each processor that receives a message at round $j$
transmits the message to all its neighbors at round $j+1$. Then $b(G)$ is the
minimum number of rounds in which all the processors share the message.

\begin{thm}[Alon \cite{Alon}]\label{thm:Alon} 
Let $G$ be the $n$-dimensional cube.
Then $b(G)=\lceil \frac n2\rceil+1$.
\end{thm}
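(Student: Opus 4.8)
The plan is to establish the two bounds $b(G)\le\lceil n/2\rceil+1$ and $b(G)\ge\lceil n/2\rceil+1$ for $G=H(n,2)$, identifying $V$ with $\{0,1\}^n$; since $n=1$ is trivial, assume $n\ge2$. For the upper bound I would simply exhibit a burning sequence of length $b:=\lceil n/2\rceil+1$: take $v_1=\zero$, let $v_2$ be the all-ones vertex, and let $v_3,\dots,v_b$ be arbitrary. Writing $w(u)$ for the number of coordinates of $u$ equal to $1$, one has $\Gamma_{b-1}(v_1)=\{u:w(u)\le b-1=\lceil n/2\rceil\}$ and $\Gamma_{b-2}(v_2)=\{u:w(u)\ge n-(b-2)=\lfloor n/2\rfloor+1\}$; as $\lceil n/2\rceil\ge\lfloor n/2\rfloor$, these two Hamming balls already cover every possible weight, hence all of $V$, so $\Gamma_{b-1}(v_1)\cup\cdots\cup\Gamma_0(v_b)=V$ and $b(G)\le b$.

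For the lower bound, first note that a burning sequence of length $\ell$ extends to one of length $\ell+1$ by repeating its last vertex (each ball $\Gamma_{\ell-i}(v_i)$ only enlarges), so it suffices to prove that $H(n,2)$ has \emph{no} burning sequence of length $b:=\lceil n/2\rceil$; equivalently, for all $v_1,\dots,v_b\in\{0,1\}^n$ there is a $u$ with $d(u,v_i)\ge b-i+1$ for every $i$. I would pass to the $\pm1$ model by setting $\xx_j:=2u_j-1$ and $\aa^{(i)}_j:=2(v_i)_j-1$, so that $\xx,\aa^{(1)},\dots,\aa^{(b)}\in\{-1,1\}^n$ and $d(u,v_i)=\frac12\bigl(n-\langle\xx,\aa^{(i)}\rangle\bigr)$. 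Then $d(u,v_i)\ge b-i+1$ becomes $\langle\xx,\aa^{(i)}\rangle\le n-2b+2i-2$, and since $b=\lceil n/2\rceil$ gives $n-2b\in\{0,-1\}$ while $\langle\xx,\aa^{(i)}\rangle\equiv n\pmod2$, all of these inequalities follow once we produce $\xx\in\{-1,1\}^n$ with
\[
\langle\xx,\aa^{(i)}\rangle\ \le\ 2(i-1)\qquad(1\le i\le b).
\]
Thus the whole theorem reduces to this ``nested discrepancy'' statement for arbitrary sign vectors $\aa^{(1)},\dots,\aa^{(b)}$, the constraints getting weaker as $i$ grows --- precisely the type of claim supplied by the Beck--Spencer Lemma.

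I would prove that statement by the floating-variable method. Start from $\yy=\zero\in[-1,1]^n$; call a coordinate \emph{frozen} once it equals $\pm1$ (it then stays so) and \emph{floating} otherwise. Repeatedly: if $f$ coordinates are currently floating, declare constraint $i$ \emph{active} when $f\ge i+1$, and move $\yy$ within the affine subspace on which every active $\langle\yy,\aa^{(i)}\rangle$ is held at its present value, using only floating coordinates, until some coordinate reaches $\pm1$. The number of active constraints equals $\min(f-1,b)$, which is always strictly less than $f$, so this subspace has positive dimension and the process runs until all coordinates are frozen. Each constraint $i$ stays active --- hence pinned at its initial value $0$ --- until $f$ first drops to $i$; thereafter only $i$ floating coordinates remain, each shifting $\langle\yy,\aa^{(i)}\rangle$ by less than $2$, so the final inner product satisfies $\langle\xx,\aa^{(i)}\rangle<2i$. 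When $n$ is even this already yields $\langle\xx,\aa^{(i)}\rangle\le 2(i-1)$, since the left side is then even, which completes that case.

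The step I expect to be the main obstacle --- and the reason the Beck--Spencer Lemma is not routine --- is sharpening $\langle\xx,\aa^{(i)}\rangle<2i$ to $\le2(i-1)$ when $n$ is odd: parity then forces the left side odd, so one only gets $\le2i-1$, whereas the reduction needs $\le2i-3$. This gap of $2$ cannot be closed with the crude bound ``each remaining floating coordinate moves an inner product by less than $2$''; it requires an amortized control of how far the released constraints can drift once unpinned (for instance, choosing the motion at each release by a potential-function argument), or else a padding argument reducing odd $n$ to the even case. Getting this bookkeeping precisely right is the technical core of the Beck--Spencer Lemma, and it is exactly the part that the paper upgrades to $q\ge3$ via the multicolor floating-variable method of Doerr and Srivastav.
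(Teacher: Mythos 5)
Your upper bound and your reduction of the lower bound to the one-sided discrepancy statement $\langle\xx,\aa^{(i)}\rangle\le 2(i-1)$ are correct, and your floating-variable sketch is essentially the Beck--Spencer argument the paper invokes (Lemma~\ref{lemma:BS}); so for even $n$ your proof is complete and matches the intended route. But for odd $n$ you explicitly leave the decisive step open, and you misplace where the missing strength comes from: it is not hidden in a sharper form of the Beck--Spencer Lemma (whose conclusion really is just $|\aa_i\cdot\xx|<2i$, and a parity/amortization refinement of the floating-variable bookkeeping is not how the gap of $2$ is closed), nor does a padding argument work --- embedding $H(n,2)$ into $H(n+1,2)$ and truncating the resulting vertex loses one unit of distance, which is exactly the unit you need.

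The missing idea is Alon's extra twist, which the paper describes in the introduction and spells out (in its $q=3$ analogue) in the concluding remarks: apply the Beck--Spencer Lemma to the \emph{shifted} sequence $\aa_1=\phi(v_2),\ldots,\aa_{b-1}=\phi(v_b)$ (with $b=\lceil n/2\rceil$), so that for each $i\ge 2$ one gets $|n-2d(v_i,w)|<2(i-1)$, hence $d(v_i,w)\ge b-i+1$ when $n$ is odd; then observe that $|\aa\cdot\xx|=|\aa\cdot(-\xx)|$, so the same bounds hold for the antipodal vertex $\bar w$. Since $d(v_1,w)+d(v_1,\bar w)=n$, one of $w,\bar w$ satisfies $d(v_1,\cdot)\ge\lceil n/2\rceil=b$, and that vertex meets all the requirements $d(v_i,\cdot)\ge b-i+1$, giving $b(H(n,2))\ge\lceil n/2\rceil+1$. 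Without this shift-and-antipode argument your proposal only yields $b(H(n,2))\ge\lfloor n/2\rfloor+1$ for odd $n$; note that this very step is the one the paper says it could not extend to $q\ge 3$, which is why its Theorem~\ref{thm:main} has a gap between the bounds.
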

For the $n$-dimensional cube, let $v_1$ be an arbitrary vertex, and let 
$v_2$ be the antipodal vertex. Then it is easy to check that 
$\Gamma_{b-1}(v_1)\cup\Gamma_{b-2}(v_2)$ covers all the vertices where 
$b=\lceil \frac n2\rceil+1$. 
This means that the burning number is at most $b$. The more difficult part
is to show that no matter how $b-1$ vertices are chosen, they cannot be a 
burning sequence, that is, after $b-1$ rounds there is still some vertex 
that has not received the message yet.

In this note, we extend the above result to Hamming graphs.
For positive integers $n,q$, 
let $[q]:=\{1,2,\ldots,q\}$ and let $[q]^n$ denote the set of 
$n$-tuple of elements of $[q]$. The Hamming distance between $u,v\in[q]^n$ 
is defined to be the number of entries (coordinates) that they differ.
The Hamming graph $H(n,q)$ has the vertex set $V=[q]^n$, and
two vertices are adjacent if they have Hamming distance one. 
Note that $H(n,2)$ is (isomorphic to) the $n$-dimensional cube. 
Our main result is the following.
\begin{thm}\label{thm:main}
Let $q\geq 3$ be an integer, and let $G=H(n,q)$ be the Hamming graph. Then
\[
\left\lfloor\left(1-\tfrac1q\right)n\right\rfloor +1\leq b(G)
\leq\left\lfloor\left(1-\tfrac1q\right)n+\tfrac{q+1}2\right\rfloor.
\]
\end{thm}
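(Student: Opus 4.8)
The plan is to prove the two inequalities separately; the upper bound is a short explicit construction, while the lower bound, the substantial half, is deduced from a multicolour analogue of the Beck--Spencer Lemma. For the upper bound, assume first that $m:=\bigl\lfloor(1-\tfrac1q)n+\tfrac{q+1}2\bigr\rfloor\ge q$ (equivalently $n\ge q/2$; the remaining small cases, where the diameter equals $n$ and $(u_1,\dots,u_{n+1})$ with $u_1$ burning the whole graph works, I would check directly). Take the length-$m$ burning sequence $(u_1,\dots,u_m)$ with $u_i=(i,i,\dots,i)$ for $1\le i\le q$, the remaining $u_i$ being irrelevant. For $x\in[q]^n$ put $n_i=|\{j:x_j=i\}|$, so $\sum_{i=1}^q n_i=n$ and $d(x,u_i)=n-n_i$. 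If $x$ lay outside every $\Gamma_{m-i}(u_i)$, $1\le i\le q$, then $n_i\le n-m+i-1$ for all $i$; summing over $i$ gives $n\le q(n-m-1)+\tfrac{q(q+1)}2$, i.e. $m\le(1-\tfrac1q)n+\tfrac{q-1}2$, and since $m\in\mathbb Z$ this forces $m\le\bigl\lfloor(1-\tfrac1q)n+\tfrac{q+1}2\bigr\rfloor-1=m-1$, a contradiction. Hence the sequence burns $G$ and $b(G)\le m$.

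For the lower bound it is enough, exactly as in Alon's argument, to prove the following: setting $b:=\bigl\lfloor(1-\tfrac1q)n\bigr\rfloor$ and $\mathrm{ag}_i(x):=|\{j:x_j=(v_i)_j\}|$, for every $v_1,\dots,v_b\in[q]^n$ there is $x\in[q]^n$ with $\mathrm{ag}_i(x)\le n-b+i-1$ for all $i\in[b]$, which is the same as $d(x,v_i)\ge b-i+1$ for all $i$. Indeed a burning sequence of length $<b$ can be padded with copies of its last vertex to one of length $b$, so this shows $b(G)\ge b+1$. Since $n-b=\lceil n/q\rceil$, the displayed bound follows by applying to $v_1,\dots,v_b$, in this order (the tight case $i=1$ being the one with the largest fire radius $b-1$), the following multicolour Beck--Spencer lemma, which is the heart of the paper:
\begin{lemma}
Let $q\ge2$ and $1\le b\le n-1$. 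For any $v_1,\dots,v_b\in[q]^n$ there is $x\in[q]^n$ with $|\{j:x_j=(v_i)_j\}|<\tfrac nq+i$ for every $i\in[b]$.
\end{lemma}
(The left side is an integer $<\tfrac nq+i\le\lceil n/q\rceil+i$, hence $\le n-b+i-1$.)

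To prove the lemma I would run the multicolour floating-variable process of Doerr--Srivastav together with the ``staircase'' of Beck--Spencer. Maintain a fractional colouring $p_1,\dots,p_n$, where $p_j$ is a probability vector on the colours still available at coordinate $j$, starting from the uniform vector, and set $A_i=\sum_j p_{j,(v_i)_j}$, so $A_i=n/q$ at the outset. Call $j$ \emph{active} until $p_j$ becomes a vertex of its simplex, let $t$ be the current number of active coordinates, and \emph{protect} constraint $i$ while $t>i$ by requiring $\sum_{j\ \mathrm{active}}\delta_{j,(v_i)_j}=0$ of the move direction $\delta$. A dimension count keeps the process alive: the moves keeping every $p_j$ on the affine hull of its current face form a space of dimension $\ge t$, while there are $\min(b,t-1)\le t-1$ protected constraints, so a nonzero feasible $\delta$ always exists; one moves along $\pm\delta$ until some $p_{j,c}$ hits $0$ (delete that colour) or $1$ (freeze that coordinate), and repeats until an integral $x$ results. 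While $i$ is protected $A_i$ is unchanged by moves and by the delete/freeze events alike, so $A_i=n/q$ until $t$ first reaches $i$; after that only the $\le i$ remaining active coordinates influence $A_i$, and each of them changes $A_i$ by strictly less than $1$ when it freezes — a coordinate $j$ that freezes to colour $(v_i)_j$ must have had $p_{j,(v_i)_j}>0$ throughout, so the jump equals $1-p_{j,(v_i)_j}<1$, while otherwise the jump is $-p_{j,(v_i)_j}\le 0$. Hence the final colouring satisfies $\mathrm{ag}_i(x)=A_i<n/q+i$, as required.

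The step I expect to require the most care is this multicolour bookkeeping: checking that the inequality $\#\{\text{protected constraints}\}\le t-1<\dim\{\text{feasible moves}\}$ survives the repeated deletion of colours, so that the process can never stall on a low-dimensional face of some simplex, and verifying rigorously that ``protection'' really pins $A_i$ to its mean through both the continuous motion and the discrete events. It matters that the resulting error is strictly below $i$, not $i+O(q)$, because the slack $\lceil n/q\rceil-n/q$ in the target is less than $1$; a crude multicolour Beck--Fiala estimate would be too weak, and the staircase protection above is designed precisely to avoid any loss in the constant.
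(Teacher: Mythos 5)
Your proposal is correct and takes essentially the same route as the paper: your lower-bound argument is the paper's multicolour Beck--Spencer lemma, with the probability-simplex bookkeeping $A_i=\sum_j p_{j,(v_i)_j}$ being an affine reparametrization of the paper's vector colouring $\bar c_i=-\tfrac1q\bar 1+\bar e_i$, your staircase protection rule (keep constraint $i$ while more than $i$ coordinates are active) matching the paper's Step($s$) procedure, and your strictness argument (a colour that survives to the end never had probability $0$) matching the equality analysis in the paper's Claim~1. The only differences are cosmetic: you prove the one-sided estimate, which indeed suffices, and you supply a direct counting proof of the upper bound, which the paper obtains from the same construction via \cite{T2024}.
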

The upper bound in Theorem~\ref{thm:main} is easily verified by construction.
Indeed, the sequence $v_i=(i,i,\ldots,i)$ for $i=1,2,\ldots,q$ 
(and any $v_i$ for $i>q$) works as a corresponding burning sequence 
(see \cite{T2024} for more details).
This construction is valid for $q=2$ as well, and in this case the 
upper bound coincides with the correct value of $b(H(n,2))$ given by 
Theorem~\ref{thm:Alon}.

To show the lower bound in Theorem~\ref{thm:Alon}, Alon used the
Beck--Spencer lemma.
\begin{lemma}[Beck--Spencer \cite{Beck-Spencer}]\label{lemma:BS}
For $1\leq i\leq n$, let $\aa_i\in\{-1,1\}^n$ be given.
Then there exists $\xx\in\{-1,1\}^n$ such that the standard inner product
satisfies $|\aa_i\cdot\xx|<2i$ for all $1\leq i\leq n$.
\end{lemma}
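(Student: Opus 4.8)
The plan is to prove Lemma~\ref{lemma:BS} by the floating variable method of Beck and Fiala, taking advantage of the fact that the bound $2i$ \emph{grows} with $i$, so that different constraints may be assigned different ``activity thresholds''. First I would relax the problem, looking for $\xx\in[-1,1]^n$, and run a rounding process starting from $\xx=\zero$. At any moment of the process, call a coordinate $j$ \emph{floating} if $x_j\in(-1,1)$, write $F\subseteq\{1,\ldots,n\}$ for the set of floating coordinates, and call the constraint $|\aa_i\cdot\xx|<2i$ \emph{active} when the number of floating coordinates in the support of $\aa_i$ exceeds $i$. Since every $\aa_i$ has full support, ``active'' simply means $|F|>i$, so the active constraints are precisely those indexed $1,2,\ldots,|F|-1$, and there are exactly $|F|-1$ of them.

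Because $|F|-1<|F|$, the homogeneous linear system $\aa_i\cdot\bm u=0$ over all active $i$, in the $|F|$ unknowns $\{u_j:j\in F\}$ (with $u_j=0$ for $j\notin F$), has a nonzero solution $\bm u$. I would then replace $\xx$ by $\xx+t\bm u$, letting $t$ grow from $0$ until some floating coordinate first reaches $\pm1$: this happens at a finite $t$ because $\bm u\neq\zero$, it keeps every coordinate in $[-1,1]$ (the already-fixed ones have $u_j=0$), and it leaves $\aa_i\cdot\xx$ unchanged for every active $i$. At that step at least one coordinate leaves $F$, so after at most $n$ iterations the process halts with $\xx\in\{-1,1\}^n$.

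It remains to bound $|\aa_i\cdot\xx|$ at the end. Fix $i$. While constraint $i$ is active its value $\aa_i\cdot\xx$ is frozen; for $i<n$ it is active from the start, where the value is $\aa_i\cdot\zero=0$, and it stays active until $|F|$ first drops to $i$ or below, while for $i=n$ it is never active but still has value $0$ at the start. In either case, once constraint $i$ is permanently inactive it has at most $i$ floating coordinates, and each of those changes by strictly less than $2$ before the process ends (it moves within the open box $(-1,1)$ to an endpoint, and the corresponding entry of $\aa_i$ is $\pm1$); hence the total change has absolute value strictly less than $2i$, and therefore so does the final value $\aa_i\cdot\xx$. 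The only points requiring care are routine bookkeeping: that the number of active constraints is always exactly $|F|-1$, that several coordinates may leave $F$ simultaneously (which only lowers the floating count and so only helps), and that strictness is preserved because a floating coordinate lies in the \emph{open} interval. I do not expect a genuine obstacle here — the growing thresholds make the Beck--Fiala counting inequality ``number of active constraints $<$ number of floating variables'' automatic — so the real content is just recognizing that the floating variable scheme, run with threshold $i$ for the $i$-th constraint, already yields the lemma.
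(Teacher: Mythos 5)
Your proposal is correct: the relaxation to $[-1,1]^n$, the rule that constraint $i$ stays active (and hence frozen at $0$) exactly while more than $i$ coordinates float, and the final telescoping bound of strictly less than $2$ per still-floating coordinate together give $|\aa_i\cdot\xx|<2i$, and the counting ``active constraints $=|F|-1<|F|$'' indeed always supplies a nonzero direction. The paper itself only cites Beck--Spencer for this lemma without reproving it, but your argument is precisely the floating variable scheme that the paper extends to the multicolor setting in its proof of Lemma~\ref{lemma:main}, i.e.\ the $q=2$ specialization of that proof, so it is essentially the same approach.
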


We can think of $\{-1,1\}^n$ as the vertex set of the $n$-dimensional
cube, and it follows that $\aa\cdot\xx = n-2d(\aa,\xx)$ as we will see
in the next section. Then Lemma~\ref{lemma:BS} is restated as follows.

\begin{lemma}[Lemma~\ref{lemma:BS} restated]\label{lemma:BS2}
For $1\leq i\leq n$, let $v_i$ be a given vertex of $H(n,2)$.
Then there exists a vertex $w$ such that 
{\upshape $|n-2d(v_i,w)|<2i$} for all $1\leq i\leq n$.
\end{lemma}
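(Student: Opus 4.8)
The plan is to deduce Lemma~\ref{lemma:BS2} from Lemma~\ref{lemma:BS} by a change of language, translating between Hamming graphs and $\pm1$ vectors. First I would fix the coordinatewise bijection $\sigma\colon [2]^n\to\{-1,1\}^n$ that sends each entry $1$ to $+1$ and each entry $2$ to $-1$. Since $\sigma$ is bijective in each coordinate separately, two vertices $u,v$ of $H(n,2)$ differ exactly in those coordinates in which $\sigma(u)$ and $\sigma(v)$ differ, so $d(u,v)=d(\sigma(u),\sigma(v))$; the bijection preserves Hamming distance.

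Next I would record the elementary identity $\aa\cdot\xx=n-2d(\aa,\xx)$ for all $\aa,\xx\in\{-1,1\}^n$. This is the only computation: writing $d=d(\aa,\xx)$, the $n-d$ coordinates in which $\aa$ and $\xx$ agree each contribute $a_j x_j=1$ to the inner product, and the $d$ coordinates in which they disagree each contribute $a_j x_j=-1$, so $\aa\cdot\xx=(n-d)-d=n-2d$.

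With these two facts the lemma is immediate. Given vertices $v_1,\dots,v_n$ of $H(n,2)$, set $\aa_i:=\sigma(v_i)\in\{-1,1\}^n$ and apply Lemma~\ref{lemma:BS} to obtain $\xx\in\{-1,1\}^n$ with $|\aa_i\cdot\xx|<2i$ for all $1\leq i\leq n$. Put $w:=\sigma^{-1}(\xx)$. Then $d(v_i,w)=d(\aa_i,\xx)$, hence $|n-2d(v_i,w)|=|n-2d(\aa_i,\xx)|=|\aa_i\cdot\xx|<2i$ for all $i$, which is the assertion.

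I do not expect a genuine obstacle: the statement is merely a reformulation, and the only point requiring a moment's care is that the chosen identification of $H(n,2)$ with $\{-1,1\}^n$ preserves distances, which holds because $\sigma$ acts coordinatewise. The real purpose of the reformulation is conceptual: it exhibits the geometric picture—points of the $\pm1$ cube, an inner-product that encodes Hamming distance linearly—that we will later generalize from $q=2$ to $q\geq3$, replacing the cube by the vertices of a regular simplex and the identity $\aa\cdot\xx=n-2d(\aa,\xx)$ by its $q$-dependent analogue.
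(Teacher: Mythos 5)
Your proof is correct and follows exactly the route the paper intends: identify $[2]^n$ with $\{-1,1\}^n$ via a distance-preserving coordinatewise bijection, use the identity $\aa\cdot\xx=n-2d(\aa,\xx)$, and invoke Lemma~\ref{lemma:BS}. The paper treats this restatement as immediate from that same identity, so there is nothing to add.
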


For our proof of the lower bound in Theorem~\ref{thm:main}, we need a
multicolor version of the Beck--Spencer lemma.

\begin{lemma}\label{lemma:dist}
Let $q\geq 3$.
For $1\leq i\leq n$, let $v_i$ be a given vertex of $H(n,q)$.
Then there exists a vertex $w$ such that 
{\upshape $|(1-\frac1q)n-d(v_i,w)|<i$} for all $1\leq i\leq n$.
\end{lemma}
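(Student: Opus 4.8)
The plan is to restate the claim as a discrepancy problem about \emph{agreement counts} and then solve it by a multicolour version of the floating variable method of Beck--Fiala \cite{Beck-Fiala}, as adapted by Doerr and Srivastav \cite{Doerr-Srivastav}; this is the multicolour counterpart of the reduction behind Lemma~\ref{lemma:BS2}.

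First I would rewrite the inequality. For $w\in[q]^n$ put $a_i(w):=|\{j:w_j=v_{i,j}\}|$. Since $d(v_i,w)=n-a_i(w)$, one has $(1-\tfrac1q)n-d(v_i,w)=a_i(w)-\tfrac nq$, so it suffices to produce a single vertex $w$ with $|a_i(w)-\tfrac nq|<i$ for all $1\le i\le n$. (If $w$ were chosen uniformly at random, each $a_i(w)$ would have mean $\tfrac nq$; the task is to derandomise this simultaneously for all $i$, with an error budget growing linearly in $i$.)

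Next I would run the floating variable method on the polytope $P:=\Delta^n\subseteq\R^{qn}$, where $\Delta$ is the probability simplex on the colour set $[q]$: a point $x=(x_j^{(c)})\in P$ assigns a distribution to each coordinate $j$, the vertices of $P$ are precisely the points of $[q]^n$, the linear functional $g_i(x):=\sum_{j=1}^n x_j^{(v_{i,j})}$ restricts to $a_i$ on vertices, and $g_i$ equals $\tfrac nq$ at the barycentre $x_0$ (all coordinates uniform). Starting from $x_0$, I would keep $x$ in the relative interior of a face of $P$; writing $k_j(x):=|\{c:x_j^{(c)}>0\}|$, that face has dimension $D(x):=\sum_{j=1}^n(k_j(x)-1)$. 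Let $A_i(x):=|\{j:0<x_j^{(v_{i,j})}<1\}|$ count the coordinates that can still move $g_i$, and call constraint $i$ \emph{active} at $x$ if $A_i(x)>i$. As long as $D(x)\ge1$ I would choose a line through $x$ in the affine hull of the current face along which every active $g_i$ stays constant, travel along it until it meets the boundary of the face (so at least one positive $x_j^{(c)}$ becomes $0$ and $D(x)$ strictly decreases), and repeat; after at most $(q-1)n$ steps $D(x)=0$, so $x$ is a vertex $w\in[q]^n$. Since a coordinate that loses a colour or is fully rounded never recovers it, each $A_i$ is nonincreasing; hence constraint $i$ is active from the start (when $i<n$) until the first time $A_i\le i$, and inactive thereafter, and $g_i$ is pinned to $\tfrac nq$ throughout its active phase. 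Once constraint $i$ becomes inactive, $g_i=\sum_j x_j^{(v_{i,j})}$ can change only through the at most $A_i\le i$ coordinates with $x_j^{(v_{i,j})}\in(0,1)$, each of which afterwards moves from a value in $(0,1)$ to one in $\{0,1\}$, a change of magnitude strictly below $1$; therefore $|a_i(w)-\tfrac nq|=|g_i(w)-\tfrac nq|<i$ for all $i<n$, while for $i=n$ the constraint is never active and the bound is immediate. Translating back via $d(v_i,w)=n-a_i(w)$ yields the lemma.

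The step I expect to carry the real content is the guarantee that at every stage the number of active constraints is strictly less than $D(x)$, which is exactly what lets us move while keeping the active $g_i$ fixed (holding $m$ active functionals constant cuts a $D(x)$-dimensional affine space down to dimension at least $D(x)-m$, so $m\le D(x)-1$ leaves a line through $x$ inside the face). This is a counting argument special to the multicolour setting: if $0<x_j^{(v_{i,j})}<1$ then coordinate $j$ has at least two live colours and so contributes at least $1$ to $D(x)$, whence $A_i(x)\le|\{j:k_j(x)\ge2\}|\le D(x)$ for every $i$; an active constraint thus satisfies $i<A_i(x)\le D(x)$, i.e.\ $i\le D(x)-1$, so the active constraints all lie in $\{1,\dots,D(x)-1\}$. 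The remaining points are routine: a line inside a bounded face always reaches its boundary, so $D$ genuinely decreases at each step and the process terminates, and the bookkeeping above shows $g_i$ moves by less than $1$ per still-free coordinate after constraint $i$ goes inactive.
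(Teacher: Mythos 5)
Your proof is correct and is essentially the paper's own argument: your product-of-simplices polytope is an affine translate of the paper's $\tilde Q^n$ (the convex hull of the colour vectors $\bar c_1,\ldots,\bar c_q$), your functionals $g_i$ correspond to the inner products $\aa_i\cdot\xx$ via $\aa_i\cdot\xx=g_i-\frac nq$, and the rounding is the same multicolour floating-variable method, with your counting $i<A_i(x)\le D(x)$ (so at most $D(x)-1$ active constraints) playing the role of the paper's variables-versus-equations count in Step($s$). The only notable difference is bookkeeping: by tracking the single entry $x_j^{(v_{i,j})}$ in each block you obtain the strict per-coordinate drift bound immediately, whereas the paper bounds the per-block change $\bar a_j\cdot\bar x_j-\bar a_j\cdot\bar y_j$ by $1$ and must dispose of the equality case separately in Claim~\ref{claim1}.
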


To prove Lemma~\ref{lemma:BS}, Beck and Spencer used the so-called 
\emph{floating variable method} based on \cite{Beck-Fiala} where
Beck and Fiala studied combinatorial discrepancies.
Then Doerr and Srivastav in \cite{Doerr-Srivastav} extended the results
in \cite{Beck-Fiala} (and many other related results concerning 
discrepancies) to multicolor settings.
We utilize their ideas on vector-coloring to prove Lemma~\ref{lemma:dist},
c.f.~the proof of Theorem~4.2 in \cite{Doerr-Srivastav}. 
So we explain a multicolor version of the floating variable method 
in the next section.

It follows from Lemma~\ref{lemma:BS2} that $b(H(n,2))\geq\lfloor\frac n2\rfloor+1$.
However, this is not the sharp bound if $n$ is odd. Alon made one more
twist to improve the bound in \cite{Alon}. The author was not able to
find an appropriate extension of this tricky part for $H(n,q)$ ($q\geq 3$),
and this suggests that the lower bound in Theorem~\ref{thm:main} could
be improved. 

\begin{conj}
For fixed $q\geq 3$ and large enough $n$, we have
$b(H(n,q))=\left\lfloor\left(1-\tfrac1q\right)n+\tfrac{q+1}2\right\rfloor$.
\end{conj}
Since $H(n,q)$ has diameter $n$, we have a trivial bound $b(H(n,q))\leq n+1$.
Thus if the conjecture holds, then we need 
$n\geq\lfloor\frac{q(q-1)}2\rfloor$.
On the other hand, it follows from the lower bound in Theorem~\ref{thm:main}
that $b(H(n,q))=n+1$ if $q\geq n\geq 3$, see the last section.

\section{Proofs}

We want to restate Lemma~\ref{lemma:dist} in the form of 
Lemma~\ref{lemma:BS}. For this, we need to assign a vector to a vertex
in $H(n,q)$ so that the vector behaves nicely with respect to the inner
product and the Hamming distance.
To this end, we use vector-coloring introduced by Doerr and Srivastav in
\cite{Doerr-Srivastav}.
For a $qn$-dimensional vector $\zz=(z_1,\ldots,z_{qn})\in\R^{qn}$, let
$\bar z_j:=(z_{q(j-1)+1},z_{q(j-1)+2},\ldots,z_{qj})\in\R^q$, and we also write
$\zz=(\bar z_1,\ldots,\bar z_n)$. (We write $\bar z$ to indicate the vector
is $q$-dimensional.)
For each color $i\in[q]$, we assign a vector 
$\bar c_i=-\frac1q\bar 1+\bar e_i\in\R^q$,
that is, $\bar c_i$ has $1-\frac1q$ at the $i$-th entry, and $-\frac1q$
at the other $q-1$ entries. 
For example, if $q=3$ then
$\bar c_1=(\frac23,-\frac13,-\frac13)$,
$\bar c_2=(-\frac13,\frac23,-\frac13)$,
$\bar c_3=(-\frac13,-\frac13,\frac23)$.
By definition the sum of all entries of $\bar c_i$ equals $0$. 
We also have
\begin{equation}\label{eq1}
\bar c_i\cdot\bar c_j=
\begin{cases}
1-\frac1q&\text{if $i=j$},\\
-\frac1q&\text{if $i\neq j$}.
\end{cases}
\end{equation}

Let $Q$ be the
set of color vectors, that is,
$Q=\{\bar c_1,\bar c_2,\ldots,\bar c_q\}$.
For each vertex $v=(\nu_1,\ldots,\nu_n)\in[q]^n$ of $H(n,q)$, we assign a vector 
$\zz=(\bar z_1,\ldots,\bar z_n)\in\R^{qn}$ by 
\begin{equation}\label{def:zi}
\bar z_i:=\bar c_{\nu_i}  
\end{equation}
for $1\leq i\leq n$. In this case we understand $\zz\in Q^n$.
Define a bijection $\phi:[q]^n\to Q^n$ by \eqref{def:zi} and
\begin{equation}\label{def:phi}
 \phi(v):=\zz.
\end{equation}

\begin{claim}\label{claim2}
Let $v,w\in[q]^n$, and $\aa=\phi(v),\xx=\phi(w)$.
Then $\aa\cdot\xx=(1-\frac1q)n-d(v,w)$. 
\end{claim}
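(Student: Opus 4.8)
The plan is to reduce $\aa\cdot\xx$ to a sum of $n$ block contributions and to evaluate each block using only the pairwise inner products of the color vectors in $Q$. The first step is to compute $\bar c_j\cdot\bar c_k$ for $j,k\in[q]$. Expanding $\bar c_j=-\frac1q\bar 1+\bar e_j$ gives $\bar c_j\cdot\bar c_k=\frac1{q^2}(\bar 1\cdot\bar 1)-\frac1q(\bar e_j\cdot\bar 1)-\frac1q(\bar 1\cdot\bar e_k)+\bar e_j\cdot\bar e_k$, and using $\bar 1\cdot\bar 1=q$, $\bar 1\cdot\bar e_k=1$, and $\bar e_j\cdot\bar e_k=1$ if $j=k$ and $0$ otherwise, this collapses to $\bar c_j\cdot\bar c_k=1-\frac1q$ when $j=k$ and $\bar c_j\cdot\bar c_k=-\frac1q$ when $j\neq k$; equivalently, the Gram matrix of $Q$ is $I-\frac1q J$. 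This is precisely the feature for which the vectors $\bar c_i$ were designed.

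Next, write $v=(\nu_1,\dots,\nu_n)$ and $w=(\mu_1,\dots,\mu_n)$. By \eqref{def:zi} we have $\aa=\phi(v)=(\bar c_{\nu_1},\dots,\bar c_{\nu_n})$ and $\xx=\phi(w)=(\bar c_{\mu_1},\dots,\bar c_{\mu_n})$, so the standard inner product on $\R^{qn}$ decomposes block-wise as $\aa\cdot\xx=\sum_{i=1}^n\bar c_{\nu_i}\cdot\bar c_{\mu_i}$. By the previous computation, the $i$-th summand equals $1-\frac1q$ when $\nu_i=\mu_i$ and $-\frac1q$ when $\nu_i\neq\mu_i$. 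Letting $s$ denote the number of coordinates where $v$ and $w$ agree, so that $s=n-d(v,w)$ by the definition of the Hamming distance, we get $\aa\cdot\xx=s(1-\frac1q)+(n-s)(-\frac1q)=s-\frac nq=(1-\frac1q)n-d(v,w)$, as claimed.

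I do not expect a genuine obstacle here: once the pairwise inner products of the color vectors are in hand, the rest is bilinearity of the inner product together with an agreement count. The only real content sits in the choice of the vectors $\bar c_i$, and the one place to be careful is the small $q\times q$ computation verifying that $\bar c_j\cdot\bar c_k=[j=k]-\frac1q$.
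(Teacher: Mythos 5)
Your proof is correct and follows essentially the same route as the paper: both rest on the inner products $\bar c_j\cdot\bar c_k=1-\frac1q$ if $j=k$ and $-\frac1q$ otherwise (the paper's \eqref{eq1}), followed by a block-wise decomposition and an agreement/disagreement count (the paper counts disagreements $s=d(v,w)$, you count agreements, which is the same computation). Your explicit verification of \eqref{eq1} by expanding $\bar c_j=-\frac1q\bar 1+\bar e_j$ is a small, harmless addition; the paper simply states it.
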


\begin{proof}
Let $s:=\#\{i\in[n]:\bar a_i\neq\bar x_i\}=d(v,w)$. 
It follows from \eqref{eq1} that
$\aa\cdot\xx =(1-\tfrac1q)(n-s)-\tfrac1qs
=(1-\tfrac1q)n-d(v,w)$.
\end{proof}

Now we can restate Lemma~\ref{lemma:dist}.

\begin{lemma}[Lemma~\ref{lemma:dist} restated]\label{lemma:main}
For $1\leq i\leq n$, let $\aa_i\in Q^n$ be given. 
Then there exists $\xx\in Q^n$ such that $|\aa_i\cdot\xx|<i$
for all $1\leq i\leq n$.
\end{lemma}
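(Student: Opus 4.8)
The plan is to prove Lemma~\ref{lemma:main} by the multicolor floating variable method, following the vector‑rounding argument behind the proof of Theorem~4.2 in \cite{Doerr-Srivastav}. First I would record a fractional version of the computation behind Claim~\ref{claim2}: for any $\aa=(\bar a_1,\dots,\bar a_n)$ and $\xx=(\bar x_1,\dots,\bar x_n)$ in $\conv(Q)^n$ one still has $\aa\cdot\xx=\sum_{j=1}^n\bar a_j\cdot\bar x_j$, and if $\bar a_j=\bar c_k$ while $\bar x_j=\sum_m\lambda_m\bar c_m$ is the (unique) barycentric representation, then $\bar a_j\cdot\bar x_j=\lambda_k-\tfrac1q\in[-\tfrac1q,1-\tfrac1q]$. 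In particular the centroid $\bar 0=\tfrac1q\sum_m\bar c_m$ lies in $\conv(Q)$ and $\bar a_j\cdot\bar 0=0$, so the starting point $\xx^{(0)}:=(\bar 0,\dots,\bar 0)\in\conv(Q)^n$ satisfies $\aa_i\cdot\xx^{(0)}=0$ for every $i$.

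Next I would run the following rounding process from $\xx^{(0)}$. Call a coordinate $j$ \emph{free} if $\bar x_j$ is not a vertex of $\conv(Q)$, and for such $j$ let $f_j\ge1$ be the dimension of the smallest face of $\conv(Q)$ containing $\bar x_j$. Set $D:=\sum_{j\ \mathrm{free}}f_j$, so $D=(q-1)n$ at the start and $D=0$ exactly when $\xx\in Q^n$. In one round, declare constraint $i\in[n]$ \emph{active} if $i<D$; the number of active constraints is then $\min(D-1,n)<D$, so inside the $D$‑dimensional space of displacements that keep each free $\bar x_j$ within its current face (and each settled one fixed) there is a nonzero direction $\bm{d}$ with $\aa_i\cdot\bm{d}=0$ for all active $i$. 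Move $\xx$ along $\bm{d}$ or $-\bm{d}$ until some free $\bar x_j$ first meets the relative boundary of its current face (this happens at a finite positive step because the faces are bounded and $\bm{d}\neq\zero$); then some $f_j$ drops or $j$ becomes settled, so $D$ strictly decreases, while $\aa_i\cdot\xx$ is unchanged for every active $i$. Iterating terminates with $\xx\in Q^n$. Since $D$ strictly decreases from round to round, for each fixed $i$ the rounds in which $i$ is active form an initial segment, and hence $\aa_i\cdot\xx$ stays equal to its initial value $0$ throughout those rounds.

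It then remains to bound the drift of $\aa_i\cdot\xx$ after constraint $i$ is released. Let $F^\ast$ be the set of free coordinates at the start of the first round in which $i$ is inactive; there $D\le i$, hence $|F^\ast|\le\sum_{j\in F^\ast}f_j=D\le i$. From that point on only the blocks $\bar x_j$ with $j\in F^\ast$ ever move, and for each such $j$, writing $\bar a_{ij}=\bar c_k$ and letting $\lambda,\lambda'$ be the barycentric coordinate of $\bar x_j$ at $\bar c_k$ at that moment and at the end, the contribution $\bar a_{ij}\cdot\bar x_j$ changes by $\lambda'-\lambda$, whose absolute value is strictly below $1$: barycentric supports only shrink, so $\lambda=0$ forces $\lambda'=0$, while $\lambda=1$ would make $\bar x_j$ equal the vertex $\bar c_k$, contradicting $j\in F^\ast$. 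Summing over $F^\ast$, the total change of $\aa_i\cdot\xx$ from $0$ to its final value has absolute value $<|F^\ast|\le i$, so $|\aa_i\cdot\xx|<i$; for $i=n$ one only needs $D=(q-1)n>n$, which holds since $q\ge3$. Through the bijection $\phi$ and Claim~\ref{claim2} this yields Lemma~\ref{lemma:dist}.

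The step I expect to be the main obstacle is choosing the activity threshold so that two opposing requirements hold simultaneously: it must be permissive enough that at every step the active constraints still leave a nonzero admissible direction (at most $D-1$ of them), yet restrictive enough that when constraint $i$ is released the surviving free dimension is already at most $i$, so that the subsequent drift cannot reach $i$. Getting this constant exactly right — threshold $i<D$ rather than a weaker variant such as $i<D/(q-1)$ — is what makes $|\aa_i\cdot\xx|<i$ come out sharp uniformly in $q\ge3$, and it is also what forces the delicate point that each block contributes a change of absolute value strictly less than $1$ (via the ``supports only shrink'' observation), which is precisely what upgrades the estimate from ``$\le i$'' to the required strict ``$<i$''.
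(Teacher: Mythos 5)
Your proposal is correct and follows essentially the same route as the paper: start at the centroid of $\tilde Q^n$, round by the multicolor floating-variable method, releasing constraint $i$ only once at most $i$ blocks remain movable, and use the ``support can only shrink'' (equivalently, a fixed entry stays fixed) observation to make each block's contribution strictly less than $1$. The differences are only bookkeeping — your face-dimension potential $D$ with threshold $i<D$ versus the paper's step counter with $|F_q(\xx_s)|\ge s$ and its count of floating entries and equations — plus the trivial edge case $F^\ast=\emptyset$, where one concludes $\aa_i\cdot\xx=0<i$ directly.
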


\begin{proof}
Let $\tilde Q\subset\R^q$ denote the convex hull of $Q$, that is,
\[
\tilde Q:=\left\{\sum_{i=1}^q\lambda_i\bar c_i:
0\leq\lambda_i\leq 1\,(i\in[q]),\,\sum_{i=1}^q\lambda_i\leq 1\right\}. 
\]
Note that if $\bar x\in\tilde Q$ then the number of entries of $\bar x$
with value $1-\frac1q$ is at most one.

\begin{claim}\label{claim1}
For $\bar a,\bar x\in Q$ and $\bar y\in\tilde Q$, we have the following.
\begin{itemize}
\item $\bar a\cdot\bar x\in\{-\frac1q,1-\frac1q\}$.
\item $-\frac1q\leq \bar a\cdot\bar y\leq 1-\frac1q$.
If $\bar a\cdot\bar y=1-\frac1q$, then $\bar y=\bar a$.
\item $|\bar a\cdot\bar x-\bar a\cdot\bar y|\leq 1$.
Moreover, if equality holds, then (i) $\bar y\in Q$ or 
(ii) there exists $i$ such that $\bar a=\bar x=\bar c_i$ and the $i$-th
entry of $\bar y$ is $-\frac1q$. 
\end{itemize}
\end{claim}

\begin{proof}
Without loss of generality we may assume that $\bar a=\bar c_1$.
The first item follows from \eqref{eq1}.

For the second item, let $\bar y=\sum_{i=1}^q\lambda_i\bar c_i\in\tilde Q$. 
Noting that $\sum_{i=2}^q\lambda_i\leq 1-\lambda_1$ and
\begin{equation*}
 \bar a\cdot\bar y=\bar c_1\cdot\sum_{i=1}^q\lambda_i\bar c_i
=(1-\tfrac1q)\lambda_1-\tfrac1q(\lambda_2+\cdots+\lambda_n),
\end{equation*}
we have $-\frac1q\leq \bar a\cdot\bar y\leq 1-\frac1q$, because
\begin{equation}\label{eq2}
1-\tfrac1q\geq (1-\tfrac1q)\lambda_1\geq 
\bar a\cdot\bar y\geq
(1-\tfrac1q)\lambda_1-\tfrac1q(1-\lambda_1)=\lambda_1-\tfrac1q\geq
-\tfrac1q.
\end{equation}
If $\bar a\cdot \bar y=1-\frac1q$, then $\lambda_1=1$, that is,
$\bar y=\bar c_1$.

By the first and second items we have 
$|\bar a\cdot\bar x-\bar a\cdot\bar y|\leq 1$.
If equality holds, then $(\bar a\cdot \bar x,\bar a\cdot\bar y)=
(-\frac1q,1-\frac1q)$ or $(1-\frac1q,-\frac1q)$.
For the former case, we have $\bar y=\bar a\in Q$ by the second item.
For the latter case, we have $\lambda_1=0$ and $\sum_{i=2}^q\lambda_i=1$
by \eqref{eq2}, and the first entry of $\bar y$ is 
$\lambda_1(1-\frac1q)+\sum_{i=2}^q\lambda_i(-\frac1q)=0-\frac1q(1-\lambda_1)
=-\frac1q$.
\end{proof}

Let $\xx=(x_1,\ldots,x_{qn})=(\bar x_1,\ldots,\bar x_{n})\in\R^{qn}$ be a variable vector.
We describe an algorithm updating $\xx$ step by step in such a way that 
at each step $\xx\in\tilde Q^{n}$, and in the end $\xx\in Q^{n}$.
We call a variable $x_k$ \emph{floating} if $-\frac1q<x_k<1-\frac1q$, 
and \emph{fixed} if $x_k\in\{-\frac1q,1-\frac1q\}$. 
Once a variable is fixed, then it stays the same value and it is treated as a constant.

Before going into detail, we explain the algorithm's geometric meaning.
Let $\P$ be the polytope obtained by taking the intersection of $\tilde Q^n$
and $n$ hyperplanes $\sum_{i=1}^q x_{q(j-1)+i}=0$ ($1\leq j\leq n$),
cf.~\cite{Godsil}.
Starting from the origin, we move along a line until we hit a facet of 
$\P$, and let $\xx_1$ be the intersection of the line and the facet.
Then, in the facet, starting from $\xx_1$, we move along another line until
we hit a face of $\P$, and let $\xx_2$ be the intersection.
We repeat this procedure so that the dimension of the faces we hit is 
decreasing. Finally, we stop at a face of dimension $0$, and we find
one of the vertices $\xx$ of $\P$, that is, $\xx\in Q^n$.
The choice of lines is crucial for this algorithm to satisfy $|\aa_i\cdot\xx|<i$ for all $i$.
To make the right choice, we will solve a system of equations repeatedly as 
described below.

We consider the following three conditions.
\begin{itemize}
 \item[(C1) ] $\aa_i\cdot\xx=0$,
 \item[(C2) ] $x_{q(j-1)+1}+x_{q(j-1)+2}+\cdots+x_{qj}=0$,
 \item[(C3) ] $\xx\in\tilde Q^{n}$.
\end{itemize}
\noindent
We will repeatedly solve a system of equations satisfying these conditions 
according to the following procedure.

\begin{description}
\item[\textbf{Step(0)} ]
Consider the following system Eq(0) of equations: 
\begin{itemize}
\item (C1) for $1\leq i\leq n$, 
\item (C2) for $1\leq j\leq n$.
\end{itemize}
Then $\xx_0:=\zero$ is a solution to Eq(0), and it satisfies (C3) as well
because $\zero\in\R^q$ is the center of $\tilde Q$, and $\zero\in\R^{rq}$ is inside $\tilde Q^n$.

\item[\textbf{Step(1)} ]
Consider the following system Eq(1) of equations:
\begin{itemize}
\item (C1) for $1\leq i\leq n-1$, 
\item (C2) for $1\leq j\leq n$.
\end{itemize}
Eq(1) has $qn$ variables and $2n-1$ equations. Thus we have a non-trivial
solution $\yy\neq\zero$. For every $\lambda\geq 0$, $\lambda\yy$ also satisfies
Eq(1). Moreover, if $\lambda=0$, then it satisfies (C3) as well.

Increase $\lambda$ continuously starting from $0$.
Then at some point at least one of the entries of $\lambda\yy$ becomes
$-\frac1q$ or $1-\frac1q$ for the first time. Using this $\lambda>0$, let
$\xx_1:=\xx_0+\lambda\yy$. Then $\xx_1$ satisfies Eq(1) and (C3).
This $\xx_1$ is not yet determined and may be updated.
Let $s=1$ and go to \textbf{Step(s)} below.

\item[\textbf{Step(s)} ]
We have a temporary vector $\xx_{s}=(\bar x_1,\ldots,\bar x_{n})$ which satisfies
(C1) for $1\leq i\leq n-s$, (C2) for $1\leq j\leq n$, and (C3). 
For $r\in\{0,1,\ldots,q\}$ let
\[
 F_r(\xx_s):=\{j\in[n]:\text{the number of fixed entries in $\bar x_j$ is $r$}\}. 
\]
We claim that $F_{q-1}(\xx_s)=\emptyset$. Suppose, to the contrary, that
$j\in F_{q-1}(\xx_s)$. Recall that the entries of $\bar x_j$ can contain
$1-\frac1q$ at most once. If $\bar x_j$ contains $1-\frac1q$, then the
other $q-2$ fixed entries are $-\frac1q$. Then, by (C2), the remaining
entry is $-\frac1q$, a contradiction.
If $\bar x_j$ does not contain $1-\frac1q$, then all the $q-1$ fixed 
entries are $-\frac1q$, and the remaining entry is $1-\frac1q$, a contradiction.

If $|F_q(\xx_s)|\geq s$, then we determine $\xx_s$ and complete Step($s$).
Moreover, if $s=n$, then exit the procedure, otherwise
let $\xx_{s+1}:=\xx_s$ and proceed to Step($s+1$).
Note that $\xx_{s+1}$ satisfies (C1) for $1\leq i\leq n-(s+1)$, 
(C2) for $1\leq j\leq n$, and (C3).

From now on, we deal with the case $|F_q(\xx_s)|\leq s-1$.
Since $|F_q(\xx_s)|$ is non-decreasing in $s$, and $|F_q(\xx_{s-1})|\geq s-1$
holds when $\xx_{s-1}$ is determined, we may assume that $|F_q(\xx_s)|=s-1$.
Let $f_r=|F_r(\xx_s)|$. 
Since $f_{q-1}=0$ and $\sum_{r=0}^{q-2} f_r=n-f_q$, it follows that 
$\sum_{r=1}^{q-2}rf_r\leq (q-2)\sum_{r=1}^{q-2}f_r=(q-2)(n-f_q)$.

Recall that we treat a fixed variable as a constant.
Let $[qn]=I_c\cup I_v$ be a partition, where $I_c$ is the set of indices
for constant entries of $\xx_s=(x_1,x_2,\ldots,x_{qn})$, 
and $I_v$ is the set of indices for floating
variables of $\xx_s$. Then $|I_v|=qn-\sum_{r=1}^q rf_r$.

Consider the following system Eq($s$) of equations:
\begin{itemize}
\item (C1) for $1\leq i\leq n-s=n-f_q-1$, 
\item (C2) for $j\in F_0(\xx_s)\cup F_1(\xx_s)\cup\cdots\cup F_{q-2}(\xx_s)$.
\end{itemize}
We have $|I_v|$ variables, and $(n-f_q-1)+(n-f_q)=2n-2f_q-1$ equations. 
Thus the dimension of the solution space is
\[
|I_v| - (2n-2f_q-1) = (q-2)(n-f_q)-\sum_{r=1}^{q-2} rf_r +1 \geq 1,
\]
and we obtain a line of solution $\xx_s+\lambda\yy$, where $\lambda\in\R$
and $y_i=0$ for $i\in I_c$.
By continuously increasing $\lambda$ from $0$, we get $\xx_s+\lambda\yy$
such that one of the entries in $I_v$ becomes $-\frac1q$ or $1-\frac1q$ 
for the first time. Using this $\lambda>0$, let $\xx:=\xx_{s}+\lambda\yy$.
Then $\xx$ satisfies (C1) for $1\leq i\leq n-s$, (C2) for $1\leq j\leq n$, 
and (C3).
Update $\xx_{s}:=\xx$, and return to the beginning of \textbf{Step(s)}.

\item[\textbf{Completion of procedure} ]
When \textbf{Step(n)} is completed, we obtain $\xx_1,\ldots,\xx_{n}$.
By construction, we have the following conditions.
\begin{itemize}
\item $|F_q(\xx_s)|\geq s$ and $\xx_s\in\tilde Q^n$ for $1\leq s<n$. 
\item $|F_q(\xx_n)|=n$, that is, $\xx_n\in Q^n$.
\item $\aa_i\cdot\xx_{n-i}=0$ for $1\leq i\leq n$.
(Indeed $\aa_i\cdot\xx_s=0$ for $1\leq s<n$, $1\leq i\leq n-s$.)
\end{itemize}
\end{description}

By running this algorithm, determine $\xx_1,\ldots,\xx_n$, and 
let $\xx:=\xx_n=(\bar x_1,\ldots,\bar x_n)\in Q^n$.

We have $\aa_1\cdot\xx_{n-1}=0$.
If $\xx=\xx_{n-1}$, then $|\aa_1\cdot\xx|=0$.
If $\xx\neq\xx_{n-1}$, then $|F_q(\xx_{n-1})|=n-1$, and 
there exists precisely one $j$ such that $j\not\in F_q(\xx_{n-1})$.
Then, writing $\aa_1=(\bar a_1,\ldots,\bar a_n)$ and
$\xx_{n-1}=(\bar y_1,\ldots,\bar y_n)$, we have $\bar a_j,\bar x_j\in Q$,
$\bar y_j\in\tilde Q\setminus Q$, and by Claim~\ref{claim1}, 
\[
|\aa_1\cdot\xx|=|\aa_1\cdot\xx-\aa_1\cdot\xx_{n-1}|
=|\bar a_j\cdot\bar x_j-\bar a_j\cdot\bar y_j|\leq 1.
\]
Moreover, if $|\bar a_j\cdot\bar x_j-\bar a_j\cdot\bar y_j|=1$, 
then there exists $i$ such that $\bar a_j=\bar x_j=\bar c_i$, 
and the $i$-th entry of $\bar y_j$ is $-\frac1q$. But then the $i$-th
entry of $\bar y_j$ is fixed and remains unchanged thereafter. 
This contradicts the fact that the $i$-th entry of $\bar x_j=\bar c_i$ is 
$1-\frac1q$. Consequently we have 
$|\aa_1\cdot\xx|=|\bar a_j\cdot\bar x_j-\bar a_j\cdot\bar y_j|<1$. 

Similarly, for $1\leq i\leq n$, we have
$\aa_i\cdot\xx_{n-i}=0$ and $|F_q(\xx_{n-i})|\geq n-i$. Thus, letting
$J=[n]\setminus F_q(\xx_{n-i})$, we have $|J|\leq i$ and
\[
|\aa_i\cdot\xx|=|\aa_i\cdot\xx-\aa_i\cdot\xx_{n-i}|
=\left|\sum_{j\in J}(\bar a_j\cdot\bar x_j-\bar a_j\cdot\bar y_j)\right|
\leq\sum_{j\in J}\left|\bar a_j\cdot\bar x_j-\bar a_j\cdot\bar y_j\right|
<i,
\]
where we write $\aa_i=(\bar a_1,\ldots,\bar a_n)$ and
$\xx_{n-i}=(\bar y_1,\ldots,\bar y_n)$.
This completes the proof of Lemma~\ref{lemma:main}.
\end{proof}

\begin{proof}[Proof of Theorem~\ref{thm:main}]
As stated in the discussion following the statement of Theorem~\ref{thm:main}, the upper bound is proved in \cite{T2024}. 
Here we prove the lower bound.
Let $m:=\lfloor(1-\tfrac1q)n\rfloor$.
For arbitrary $m$ vertices 
$v_1,v_2,\ldots,v_m\in [q]^n$,
we show that there exists a vertex 
$w\in [q]^n$ such that $d(v_i,w)\geq m+1-i$ for all $1\leq i\leq m$.

Recall the definition of $\phi$ from \eqref{def:phi}.
Let $\aa_i=\phi(v_i)\in Q^n$ for $1\leq i\leq n$.
By Lemma~\ref{lemma:main} and Claim~\ref{claim2}, we get $\xx\in Q^n$
such that 
\[
|\aa_i\cdot\xx|=\left|(1-\tfrac1q)n-d(v_i,w) \right|<i
\]
for all $1\leq i\leq n$, where $w=\phi^{-1}(\xx)$.
Thus we have 
\[
d(v_i,w)>(1-\tfrac1q)n-i 
\]
for all $1\leq i\leq n$.
Let $n=qk+r$, $0\leq r<q$. Then,
\[
(1-\tfrac1q)n=(q-1)k+r-\tfrac rq.
\]
This together with the fact that the distance is an integer yields
\begin{align*}
d(v_i,w)&\geq
\begin{cases}
 (q-1)k-i+1&\text{if $r=0$},\\
 (q-1)k+r-i&\text{if $1\leq r<q$}
\end{cases}\\
&=m+1-i.
\end{align*}
Thus $w\not\in\Gamma_{m-1}(v_1)\cup\cdots\cup\Gamma_0(v_m)$, 
which means that $b(H(n,q))\geq m+1$.
\end{proof}

\section{Concluding remarks}
In this subsection, we also write $b(n,q)$ to mean $b(H(n,q))$.

\subsection{The case $q\geq n$}
Recall that $H(n,q)$ has diameter $n$, so $b(n,q)\leq n+1$ for all $n,q$.

\begin{claim}\label{claim3}
Let $q\geq 3$ and $s\leq n$. If $b(n,q-1)\geq s$ then $b(n,q)\geq s+1$.
\end{claim}
\begin{proof}
Suppose that $b(n,q-1)\geq s$ but $b(n,q)\leq s$.
Then we have a burning sequence for $H(n,q)$ of length $s$, 
starting from, without loss of generality, $v_1=q\mathbf{1}$.
Since $[q]^n\setminus\Gamma_{s-1}(v_1)\supset[q-1]^n$,
we need to cover $H(n,q-1)$ in less than $s$ rounds, which is impossible
because $b(n,q-1)\geq s$.
\end{proof}

\begin{proposition}
If $q\geq n\geq 3$ then $b(H(n,q))=n+1$.
\end{proposition}
\begin{proof}
Since $b(n,q+1)\geq b(n,q)$, it suffices to show that $b(n,n)=n+1$.
This is true if $n=3$. Indeed we have $b(3,2)=3$ by Theorem~\ref{thm:Alon},
and so $b(3,3)\geq 4$ by Claim~\ref{claim3}.

Now let $n\geq 4$.
By Theorem~\ref{thm:main} we have
\begin{equation}\label{b(n,n-2)}
 b(n,n-2)\geq\lfloor\left(1-\tfrac1{n-2}\right)n\rfloor+1=
\lfloor n-1-\tfrac2{n-2}\rfloor+1=n-1.
\end{equation}
Applying Claim~\ref{claim3} to \eqref{b(n,n-2)} twice, we get
$b(n,n)\geq n+1$.
\end{proof}

\subsection{Possible extension of Lemma~\ref{lemma:dist}}
For the proof of Theorem~\ref{thm:Alon}, Alon observed the following.
In Lemma~\ref{lemma:BS} we have 
$|\aa_i\cdot\xx|=|\aa_i\cdot(-\xx)|$, and so the inequality on $d(v_i,w)$
in Lemma~\ref{lemma:BS2} also holds if we replace $v_i$ with its 
antipodal vertex. 
It would be nice if a similar extension was possible for 
Lemma~\ref{lemma:dist}, which would improve Theorem~\ref{thm:main}.
For simplicity, let $q=3$ and $n=3k+1\geq 4$ be fixed.
For $v=(x_1,\ldots,x_n)\in\{0,1,2\}^n$, let
$v'=(x_1+1,\ldots,x_n+1)$ and $v''=(x_1+2,\ldots,x_n+2)$, where addition
is done in modulo 3. 
For vertices $u$ and $v$, let $f(u,v):=|\frac23(3k+1)-d(u,v)|$,
and $g(u,v):=\max\{f(u,v),f(u,v'),f(u,v'')\}$.

\begin{prob}
For $1\leq i\leq 3k+1$, 
let $u_i$ be a given vertex of $H(3k+1,3)$ on $\{0,1,2\}^n$.
Then is it true that there exists a vertex $w$ such that 
$g(u_i,w)<i$ for all $1\leq i\leq 3k+1$?
\end{prob}

If this is true, then it implies that $b(3k+1,3)=2k+2$. 
To see this, assume that the answer to the problem is affirmative, and
let vertices $v_1,v_2,\ldots,v_{2k+1}$ be given in $H(3k+1,3)$.
As in \cite{Alon}, we apply the existence of $w$ to 
$u_1:=v_2, u_2:=v_3,\ldots,u_{2k}:=v_{2k+1}$. Then we have
$g(v_i,w)<i-1$ for $i=2,3,\ldots,2k+1$.
Thus all of $w,w'$ and $w''$ have distance at least $2k+2-i$ from $v_i$
for all $2\leq i\leq 2k+1$.
Finally we use the fact that one of $w,w'$, and $w''$ has distance at least
$2k+1$ from $v_1$ because $d(v_1,w)+d(v_1,w')+d(v_1,w'')=2(3k+1)$.
Consequently, there is $w^*\in\{w,w',w''\}$ such that 
$d(v_i,w^*)\geq 2k+2-i$ for all $1\leq i\leq 2k+1$, that is,
$b(3k+1,3)\geq 2k+2$.

\section*{Acknowledgments}
The author thanks Noga Alon for valuable comments, in particular, for 
pointing out an error in the conjecture in the earlier version, 
Hajime Tanaka for providing information on eigenpolytopes and \cite{Godsil},
and Naoki Matsumoto for stimulating discussions.
He also thanks the referees who read the manuscript very carefully and 
corrected some errors.

The author was supported by JSPS KAKENHI Grant Number JP23K03201.

\end{document}